\newtheorem{theorem}{Theorem}[section]
\newtheorem{proposition}[theorem]{Proposition}
\newtheorem{remark}[theorem]{Remark}
\numberwithin{equation}{section}
\def\CD{{\mathcal D}}
\def\CP{{\mathcal P}}
\def\CV{{\mathcal V}}
\def\BB{{\mathbb B}}
\def\RR{{\mathbb R}}
\def\TT{{\mathbb T}}
\begin{document}

\title[Gaussian bounds for heat kernels on the ball and simplex]
{Gaussian bounds for the heat kernels \\on the ball and simplex: Classical approach}

\author[G. Kerkyacharian]{Gerard Kerkyacharian}
\address{LPSM, CNRS-UMR 7599, and Crest}
\email{kerk@math.univ-paris-diderot.fr}

\author[P. Petrushev]{Pencho Petrushev}
\address{Department of Mathematics, University of South Carolina, Columbia, SC 29208}
\email{pencho@math.sc.edu}

\author[Y. Xu]{Yuan Xu}
\address{Department of Mathematics, University of Oregon, Eugene, Oregon 97403-1222}
\email{yuan@math.uoregon.edu}

\subjclass[2010]{42C05, 35K08}
\keywords{Heat kernel, Gaussian bounds, orthogonal polynomials, ball, simplex}
\thanks{The first author has been supported by ANR Forewer.
The second author has been supported by NSF Grant DMS-1714369.
The third author has been supported by NSF Grant DMS-1510296.}
\thanks{Corresponding author: Pencho Petrushev, E-mail: pencho@math.sc.edu}

\begin{abstract}
Two-sided Gaussian bounds are established for the weighted heat kernels on the unit ball and simplex in $\RR^d$
generated by classical differential operators whose eigenfunctions are algebraic polynomials.
\end{abstract}

\date{January 21, 2018}

\maketitle

\section{Introduction}

Two-sided Gaussian bounds have been established for heat kernels in various settings.
For example, Gaussian bounds for the Jacobi heat kernel on $[-1, 1]$
with weight $(1-x)^\alpha(1+x)^\beta$, $\alpha, \beta>-1$,
are obtained in \cite[Theorem 7.2]{CKP} and \cite[Theorem~5.1]{KPX},
and also in \cite{NS} in the case when $\alpha, \beta\ge -1/2$
(see \eqref{gauss-int} below).

In this article we establish two-sided Gaussian estimates for the heat kernels
generated by classical differential operators whose eigenfunctions are algebraic polynomials
in the weighted cases on the unit ball and simplex in $\RR^d$.
Such estimates are also established in \cite{KPX} using a general method that utilizes known
two-sided Gaussian estimates for the heat kernels generated by weighted Laplace operators on Riemannian manifolds.
Here we derive these results directly from the Gaussian bounds for the Jacobi heat kernel.
Such a direct method leads to working in somewhat restricted range
for the parameters of the weights (commonly used in the literature).
We next describe our results in detail.

We shall use standard notation. In particular, positive constants will be denoted by
$c, c', \tilde{c}, c_1, c_2, \dots$ and they may vary at every occurrence.
Most constants will depend on parameters that will be clear from the context.
The notation $a\sim b$ will stand for $c_1\le a/b \le c_2$.
The functions that we deal with in this article are assumed to be real-valued.

\subsection{Heat kernel on the unit ball}\label{subsec:ball}

Consider the operator
\begin{equation}\label{D-mu}
\CD_\mu:= \sum_{i=1}^d (1-x_i^2)\partial^2_i-2\sum_{1\le i < j \le d}x_i x_j\partial_i\partial_j
- (d+2 \mu)\sum_{i=1}^d x_i \partial_i,
\end{equation}
acting on sufficiently smooth functions on the unit ball
$\BB^d:=\big\{x\in\BB^d: \|x\|<1\big\}$ in $\RR^d$
equipped with the measure
\begin{equation}\label{def-meas-ball}
d\nu_\mu = w_\mu(x)dx := (1-\| x\|^2)^{\mu-1/2} dx, \quad \mu\ge 0,
\end{equation}
and the distance
\begin{equation}\label{dist-ball}
d_\BB(x,y) := \arccos \big(\langle x, y\rangle + \sqrt{1-\| x\|^2}\sqrt{1-\| y\|^2}\big),
\end{equation}
where $\langle x, y\rangle$ is the inner product of $x, y\in \RR^d$
and $\|x\|:= \sqrt{\langle x, x\rangle}$.
As will be shown the operator $\CD_\mu$ is symmetric and $-\CD_\mu$ is positive in $L^2(\BB, w_\mu)$.
Furthermore, $\CD_\mu$ is essentially self-adjoint.

Denote
\begin{equation}\label{def-ball-ball}
B_\BB(x, r):=\{y\in \RR^d: d_\BB(x,y)<r\}
\quad\hbox{and}\quad
V_\BB(x, r):= \nu_\mu(B_\BB(x, r)).
\end{equation}
As is well known (see, e.g. \cite[Lemma 11.3.6]{DaiX}) 
\begin{equation}\label{V-ball-ball}
V_\BB(x, r) \sim r^d(1-\|x\|^2+r^2)^\mu.
\end{equation}

Denote by $\CV_n(w_\mu)$ the set of all algebraic polynomials of degree $n$ in $d$ variables
that are orthogonal to lower degree polynomials in $L^2(\BB^d, w_\mu)$,
and let $\CV_0(w_\mu)$ be the set of all constants.
As is well known (see e.g. \cite[\S2.3.2]{DX}) $\CV_n(w_\mu)$, $n=0, 1, \dots$, are eigenspaces of the operator $\CD_\mu$,
more precisely,
\begin{equation}\label{eigen-ball}
\CD_\mu P=-n(n+d+2\mu-1)P,\quad \forall P\in\CV_n(w_\mu).
\end{equation}
Let $P_n(w_\mu; x, y)$ be the kernel of the orthogonal projector onto $\CV_n(w_\mu)$.
Then the semigroup $e^{t\CD_\mu}$, $t>0$, generated by $\CD_\mu$ has a (heat) kernel $e^{t\CD_\mu}(x,y)$ of the form
\begin{equation}\label{ball-HK}
e^{t\CD_\mu}(x,y)=\sum_{n=0}^\infty e^{-tn(n+2\lambda)}P_n(w_\mu; x, y),\quad \lambda:=\mu + (d-1)/2.
\end{equation}
We establish two-sided Gaussian bounds on $e^{t\CD_\mu}(x,y)$:

\begin{theorem}\label{thm:Gauss-ball}
For any $\mu\ge 0$ there exist constants $c_1, c_2, c_3, c_4>0$ such that for all $x, y\in \BB^d$ and $t>0$
\begin{equation} \label{gauss-ball}
\frac{c_1\exp\{- \frac{d_\BB(x,y)^2}{c_2t}\}}{\big[V_\BB(x, \sqrt t) V_\BB(y, \sqrt t)\big]^{1/2}}
\le  e^{tD_\mu}(x,y)
\le \frac{c_3\exp\{- \frac{d_\BB(x,y)^2}{c_4t}\}}{\big[V_\BB(x, \sqrt t) V_\BB(y, \sqrt t)\big]^{1/2}}.
\end{equation}
\end{theorem}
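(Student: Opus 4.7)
The plan is to reduce the ball heat kernel to the one-dimensional Jacobi heat kernel on $[-1,1]$ via the classical integral representation of the ball reproducing kernel due to Xu, and then transfer the known two-sided Gaussian bounds from \cite{CKP, KPX}. Writing $\lambda = \mu + (d-1)/2$ and
$$u(s):=\langle x,y\rangle+s\sqrt{(1-\|x\|^2)(1-\|y\|^2)},\quad s\in[-1,1],$$
the starting point (for $\mu>0$) is the identity
$$P_n(w_\mu;x,y) = c_\mu\,\frac{n+\lambda}{\lambda}\int_{-1}^1 C_n^\lambda(u(s))(1-s^2)^{\mu-1}\,ds,$$
with $C_n^\lambda$ the Gegenbauer polynomial of index $\lambda$. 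Interchanging sum and integral in \eqref{ball-HK} then yields
$$e^{t\CD_\mu}(x,y)=c_\mu\int_{-1}^1 \mathcal{G}_t^\lambda(u(s))\,(1-s^2)^{\mu-1}\,ds,$$
where $\mathcal{G}_t^\lambda(u) := \sum_{n\ge 0}e^{-tn(n+2\lambda)}\frac{n+\lambda}{\lambda}C_n^\lambda(u)$ is, up to a multiplicative constant, the Jacobi heat kernel $K_t^{(\lambda-1/2,\lambda-1/2)}(u,1)$ on $[-1,1]$.

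The next step is to insert the known two-sided bound for this one-dimensional heat kernel,
$$\mathcal{G}_t^\lambda(u)\sim\frac{\exp\bigl(-(\arccos u)^2/(ct)\bigr)}{[V_J(u,\sqrt t)V_J(1,\sqrt t)]^{1/2}},\quad V_J(u,\sqrt t)\sim \sqrt t(1-u+t)^\lambda(1+u+t)^\lambda,$$
and $V_J(1,\sqrt t)\sim t^{\lambda+1/2}$, as provided by \cite[Theorem 7.2]{CKP} and \cite[Theorem 5.1]{KPX}. This reduces Theorem~\ref{thm:Gauss-ball} to proving matching upper and lower bounds for
$$I(t;x,y) := \int_{-1}^1 \frac{\exp\bigl(-(\arccos u(s))^2/(ct)\bigr)\,(1-s^2)^{\mu-1}}{t^{(\lambda+1)/2}\bigl[(1-u(s)+t)(1+u(s)+t)\bigr]^{\lambda/2}}\,ds$$
against the target $\exp(-d_\BB(x,y)^2/(c't))/\bigl[t^{d/2}(1-\|x\|^2+t)^{\mu/2}(1-\|y\|^2+t)^{\mu/2}\bigr]$, the denominator being the desired expression by \eqref{V-ball-ball}.

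For the upper bound the main observation is that $u(s)\le u(1)=\cos d_\BB(x,y)$, so $\arccos u(s)\ge d_\BB(x,y)$ pointwise in $s$; this lets me extract $\exp(-d_\BB(x,y)^2/(c_4 t))$ from the integrand (at the price of a slight loss in the constant) and then bound what remains using pointwise estimates relating $(1\pm u(s)+t)$ to $(1-\|x\|^2+t)$ and $(1-\|y\|^2+t)$, followed by elementary integration of $(1-s^2)^{\mu-1}$. For the lower bound the natural strategy is to restrict the integration to a suitable subinterval near $s=1$ on which $\arccos u(s)\le d_\BB(x,y)+C\sqrt t$, so that the Gaussian factor stays comparable to $\exp(-d_\BB(x,y)^2/(c_2t))$, and then match the algebraic factors against $(1-s^2)^{\mu-1}\,ds$ to recover the correct order of magnitude.

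The principal technical obstacle is the bookkeeping of algebraic factors: the denominator $t^{d/2}(1-\|x\|^2+t)^{\mu/2}(1-\|y\|^2+t)^{\mu/2}$ in the target must emerge from the combination of $t^{(\lambda+1)/2}\bigl[(1-u(s)+t)(1+u(s)+t)\bigr]^{\lambda/2}$ together with the $s$-integration of $(1-s^2)^{\mu-1}$. The identity $\lambda=\mu+(d-1)/2$ should supply exactly the missing $t^{(d-1)/2}$ once the $s$-integration is carried out, but verifying this uniformly across the possible relative sizes of $\sqrt t$, $\sqrt{1-\|x\|^2}$, $\sqrt{1-\|y\|^2}$, and $d_\BB(x,y)$ requires a careful case analysis. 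Finally, the boundary case $\mu=0$ will need a separate argument, since Xu's integral formula then degenerates into a pointwise evaluation at the endpoints $s=\pm1$ and the reproducing kernel takes a different explicit form.
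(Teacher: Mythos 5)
Your reduction is exactly the one the paper uses: Xu's representation \eqref{rep-Pn-ball} combined with \eqref{gegen-HK} gives \eqref{h-kernel-ball2}, and then Theorem~\ref{thm:Gauss-int} with $\alpha=\beta=\lambda-1/2$ is inserted. The genuine gap is in your plan for the resulting $s$-integral, specifically the upper bound. You propose to pull out the whole Gaussian via $\arccos u(s)\ge d_\BB(x,y)$ and to control the remainder by \emph{pointwise} estimates of $(1\pm u(s)+t)$ plus the elementary integral of $(1-s^2)^{\mu-1}$. This discards the decay of the integrand in $s$, and that decay is indispensable. Test case: $x=y$ (so $d_\BB(x,y)=0$) with $H:=1-\|x\|^2$ satisfying $t\ll H\le 1/2$. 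Then $u(s)=1-(1-s)H$, the target upper bound is of order $t^{-d/2}(t+H)^{-\mu}$, while the algebraic part of your integrand is comparable to $t^{-(\lambda+1)/2}\big((1-s)H+t\big)^{-\lambda/2}$; integrating this against $(1-s^2)^{\mu-1}$, the region $1-s\sim 1$ alone contributes about $t^{-(\lambda+1)/2}(t+H)^{-\lambda/2}$, which exceeds the target by the factor $\big((t+H)/t\big)^{\mu/2-(d-1)/4}$ (recall $\lambda=\mu+(d-1)/2$), unbounded as soon as $\mu>(d-1)/2$; the cruder version in which the denominator is bounded uniformly in $s$ fails for every $\mu>0$. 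The missing idea is to keep part of the exponential: since $1-u(s)\sim d_\BB(x,y)^2+(1-s)H(x,y)$, one retains the factor $\exp\{-(1-s)H(x,y)/(ct)\}$ in the $s$-integral, and it is the substitution $v=(1-s)H/t$ applied to this factor against $(1-s)^{\mu-1}$ that produces the crucial $\min(1,t/H)^{\mu}$. This is precisely the paper's quantity $A_t(x,y)$ in \eqref{def-At} and estimate \eqref{At-upper}.

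A second point that you only flag as ``careful case analysis'' is in fact the other crux: even with the correct bound $e^{t\CD_\mu}(x,y)\lesssim \exp\{-d_\BB(x,y)^2/(ct)\}\,t^{-d/2}(t+H(x,y))^{-\mu}$ in hand, the quantity $t^{d/2}(t+H(x,y))^{\mu}$ is \emph{not} pointwise comparable to $\big[V_\BB(x,\sqrt t)V_\BB(y,\sqrt t)\big]^{1/2}\sim t^{d/2}\big(\sqrt t+\sqrt{1-\|x\|^2}\big)^{\mu}\big(\sqrt t+\sqrt{1-\|y\|^2}\big)^{\mu}$ when $1-\|x\|^2$ and $1-\|y\|^2$ have very different sizes. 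The paper closes this using $\big|\sqrt{1-\|x\|^2}-\sqrt{1-\|y\|^2}\big|\le\sqrt2\,d_\BB(x,y)$ together with \eqref{elementary}, which costs a factor $(1+d_\BB(x,y)/\sqrt t)^{\mu}$ that is then absorbed into the Gaussian via \eqref{ineq-3} at the price of an $\exp\{\varepsilon d_\BB(x,y)^2/t\}$ with $\varepsilon$ small; your outline contains no mechanism for this, and without it the upper bound cannot be concluded. Your lower-bound plan (restricting to $1-s\lesssim t/H$) is viable and is essentially a localized form of the paper's substitution argument, but it likewise needs the absorption of the factor $(1+d_\BB(x,y)^2/t)^{\lambda}$ into the exponential constant and the elementary volume bounds $V_\BB(x,r)\ge cr^{d}(1-\|x\|^2)^{\mu}$, $V_\BB(x,r)\ge cr^{d+2\mu}$; these details, together with the separate (easier) case $\mu=0$ via \eqref{rep-Pn-ball-2}, would have to be written out.
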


\subsection{Heat kernel on the simplex}\label{subsec:simplex}

We also establish two-sided Gaussian bounds for the heat kernel generated by the operator
\begin{equation}\label{def-D-simplex}
\CD_\kappa := \sum_{i=1}^d x_i\partial_i^2 - \sum_{i=1}^d\sum_{j=1}^d x_ix_j \partial_i\partial_j
+ \sum_{i=1}^d \big(\kappa_i + \tfrac12 - (|\kappa|+ \tfrac{n+1}{2}) x_i\big) \partial_i
\end{equation}
with $|\kappa|:=\kappa_1+\dots+\kappa_{d+1}$
acting on sufficiently smooth functions on the simplex
\begin{equation*}
\TT^d:=\Big\{x \in \RR^d: x_1 \ge 0,\dots, x_d\ge 0, |x| \le 1 \Big\},
\quad |x|:= x_1+\cdots+x_d,
\end{equation*}
in $\RR^d$, $d\ge 1$, equipped with the measure
\begin{equation}\label{def-meas-simpl}
d\nu_\kappa(x)=w_\kappa(x)dx:=\prod_{i=1}^d x_{i}^{\kappa_i-1/2}(1-|x|)^{\kappa_{d+1}-1/2}dx,
\quad  \kappa_i \ge 0,
\end{equation}
and the distance
\begin{equation}\label{def-dist-simpl}
d_\TT(x,y) := \arccos \Big(\sum_{i=1}^d \sqrt{x_i y_i} + \sqrt{1-|x|}\sqrt{1-|y|}\Big).
\end{equation}
As will be shown the operator $\CD_\kappa$ is symmetric and $-\CD_\kappa$ is positive
in the weighted space $L^2(\TT, w_\kappa)$,
furthermore, $\CD_\kappa$ is essentially self-adjoint.

We shall use the notation:
\begin{equation}\label{B-simplex}
B_\TT(x, r):=\{y\in\TT^d: \rho(x, y)<r\} \quad\hbox{and}\quad V_\TT(x, r):=\nu_\kappa(B(x, r)).
\end{equation}
It is known that
\begin{equation}\label{V-ball-simpl}
V_\TT(x, r) \sim r^d (1-|x|+r^2)^{\kappa_{d+1}}\prod_{i=1}^d(x_i+r^2)^{\kappa_i}.
\end{equation}
This equivalence follows e.g. from \cite[(5.1.10)]{DaiX},
see also (4.23)-(4.24) in \cite{KPX}.

Denote by $\CV_n(w_\kappa)$ the set of all algebraic polynomials of degree $n$ in $d$ variables
that are orthogonal to lower degree polynomials in $L^2(\TT^d, w_\kappa)$,
and let $\CV_0(w_\kappa)$ be the set of all constants.
As is well known (e.g. \cite[\S2.3.3]{DX}) $\CV_n(w_\kappa)$, $n=0, 1, \dots$,
are eigenspaces of the operator $\CD_\kappa$, namely,
\begin{equation}\label{sim-eigen-sp}
\CD_\kappa P=- n\big(n+|\kappa|+(d-1)/2\big)P,
\quad \forall P\in\CV_n(w_\kappa),\;\; n=0, 1, \dots.
\end{equation}
Let $P_n(w_\kappa; x, y)$ be the kernel of the orthogonal projector onto $\CV_n(w_\kappa)$ in $L^2(\TT^d, w_\kappa)$.
The heat kernel $e^{t\CD_\kappa}(x,y)$, $t>0$, takes the form
\begin{equation}\label{simplex-HK}
e^{t\CD_\kappa}(x,y)=\sum_{n=0}^\infty e^{-tn(n+\lambda_\kappa)} P_k(w_\kappa; x, y),
\quad \lambda_\kappa:= |\kappa|+(d-1)/2.
\end{equation}

\begin{theorem}\label{thm:Gauss-simpl}
For any $\kappa_i\ge 0$, $i=1, \dots, n+1$,
there are constants $c_1,c_2,c_3,c_4>0$ such that for all $x,y \in \TT^d$ and $t>0$
\begin{equation}\label{gauss-simplex}
\frac{c_1\exp\{- \frac{d_\TT(x,y)^2}{c_2t}\}}{\big[V_\TT(x, \sqrt t)V_\TT(y, \sqrt t)\big]^{1/2}}
\le  e^{t\CD_\kappa}(x,y)
\le \frac{c_3\exp\{- \frac{d_\TT(x,y)^2}{c_4t}\}}{\big[V_\TT(x, \sqrt t)V_\TT(y, \sqrt t)\big]^{1/2}}.
\end{equation}

\end{theorem}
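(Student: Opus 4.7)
The plan is to mirror the proof of Theorem~\ref{thm:Gauss-ball}, now on the simplex, using the classical $(d{+}1)$-fold integral representation of the reproducing kernel $P_n(w_\kappa;x,y)$ on $\TT^d$ (due to Y.~Xu, see e.g.\ \cite[Ch.~5]{DaiX}):
\[
P_n(w_\kappa;x,y)=c_\kappa\int_{[-1,1]^{d+1}} Z_n\bigl(z(x,y,s)\bigr)\prod_{i=1}^{d+1}(1-s_i^2)^{\kappa_i-1}\,ds,
\]
where
\[
z(x,y,s)=\sum_{i=1}^d s_i\sqrt{x_iy_i}+s_{d+1}\sqrt{(1-|x|)(1-|y|)}
\]
and $Z_n$ is a renormalization of the univariate Gegenbauer projection kernel of degree $n$. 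The key geometric point is that $z(x,y,s)\in[-1,1]$ with $\max_s z(x,y,s)=\cos d_\TT(x,y)$, attained at $s=(1,\dots,1)$; this is what ties the representation to the intrinsic distance~\eqref{def-dist-simpl}.

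Summing \eqref{simplex-HK} termwise against this representation and interchanging sum and integral identifies $e^{t\CD_\kappa}(x,y)$ with a $(d{+}1)$-fold integral of the univariate Jacobi heat kernel on $[-1,1]$ evaluated at $z(x,y,s)$, with Jacobi parameters chosen so that the univariate eigenvalues match the simplex eigenvalues $n(n+\lambda_\kappa)$ of \eqref{sim-eigen-sp}. At this stage the two-sided Gaussian estimates for the Jacobi heat kernel recalled in the introduction apply, so Theorem~\ref{thm:Gauss-simpl} is reduced to estimating a weighted integral of Jacobi Gaussians against the product weight $\prod_i(1-s_i^2)^{\kappa_i-1}$.

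For the upper bound, I would use the direct inequality $\arccos^2 z(x,y,s)\ge d_\TT(x,y)^2$ (a consequence of the maximum property above and monotonicity of $\arccos$) to factor $\exp(-d_\TT(x,y)^2/c_2 t)$ out of the integral; the remaining weighted integral is then estimated coordinate by coordinate by splitting each $s_i\in[-1,1]$ into a \emph{central} piece near $s_i=1$ of length $\sim t/(x_i+t)$ (whose contribution to $\int(1-s_i^2)^{\kappa_i-1}ds_i$ is exactly of order $(x_i+t)^{\kappa_i}$, matching one factor in \eqref{V-ball-simpl}) and a \emph{tail} piece producing an additional Gaussian decay. For the lower bound, I would restrict the integration to a neighborhood of $s=(1,\dots,1)$ of matching scale on which $z(x,y,s)$ stays within the regime where the Jacobi lower bound applies, and the product of local measures then reproduces $[V_\TT(x,\sqrt t)V_\TT(y,\sqrt t)]^{-1/2}$. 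The main obstacle is the anisotropy of $V_\TT(x,\sqrt t)$ in \eqref{V-ball-simpl}: the $(d{+}1)$ factors $(x_i+t)^{\kappa_i}$ and $(1-|x|+t)^{\kappa_{d+1}}$ must be produced separately by the corresponding $s_i$-integrations, which forces a careful case analysis according to whether each $x_i$ (respectively $1-|x|$) is small or large relative to $t$, and this is where the claimed Gaussian behavior is actually verified.
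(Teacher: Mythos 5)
Your overall strategy is the same as the paper's: represent $P_n(w_\kappa;x,y)$ as an integral over $[-1,1]^{d+1}$ against the product weight, sum the heat series \eqref{simplex-HK} under the integral to obtain a univariate Jacobi heat kernel, invoke Theorem~\ref{thm:Gauss-int}, and then estimate the resulting weighted integral coordinate by coordinate to produce the anisotropic factors in \eqref{V-ball-simpl}. However, two steps in your plan have genuine gaps. First, the representation you write down is not correct as stated, and the inaccuracy breaks precisely the step where you ``identify'' $e^{t\CD_\kappa}$ with a Jacobi heat kernel: the integrand in \eqref{rep-Pn-simpl} is the degree-$n$ Jacobi polynomial $P_n^{(\lambda-\frac12,-\frac12)}$, $\lambda=|\kappa|+(d-1)/2$, evaluated at the \emph{quadratically transformed} argument $2z(u;x,y)^2-1$ (equivalently a Gegenbauer polynomial of degree $2n$ in $z$), not a degree-$n$ Gegenbauer kernel evaluated at $z$. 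It is this quadratic transformation that makes the simplex eigenvalues $n(n+\lambda_\kappa)$ match the Jacobi eigenvalues of $L_{\lambda-\frac12,-\frac12}$ and lets the series close up to $e^{tL_{\lambda-1/2,-1/2}}\big(1,2z(u;x,y)^2-1\big)$ as in \eqref{h-kernel-simplex2}; with a degree-$n$ ultraspherical kernel at $z$ the eigenvalues would have to be $n(n+2\lambda)$ and no choice of Jacobi parameters in the variable $z$ fixes this. Relatedly, the representation requires all $\kappa_i>0$; since the theorem allows $\kappa_i=0$ and your weight $(1-s_i^2)^{\kappa_i-1}$ is then non-integrable, you need the limiting form in which the $u_i$-integral becomes the average of point evaluations at $u_i=\pm1$.

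The second, more substantive gap is in your upper bound. The coordinatewise central/tail splitting (whose scale is governed by $\sqrt{x_iy_i}$, the coefficient of $u_i$ in $z$, not by $x_i$ alone) yields at best a bound of the form $t^{|\kappa|}\prod_{i}(\sqrt{x_iy_i}+t)^{-\kappa_i}$ times a Gaussian; it does not ``exactly'' produce the symmetric factors $(x_i+t)^{\kappa_i/2}(y_i+t)^{\kappa_i/2}$ of $\big[V_\TT(x,\sqrt t)V_\TT(y,\sqrt t)\big]^{1/2}$. Converting the asymmetric quantity into the symmetric one is exactly where part of the Gaussian must be sacrificed: one uses $|\sqrt{x_i}-\sqrt{y_i}|\le d_\TT(x,y)$ together with \eqref{elementary} and \eqref{ineq-3} to get $\prod_i(x_i+t)^{\kappa_i/2}(y_i+t)^{\kappa_i/2}\le c(\varepsilon)\prod_i(\sqrt{x_iy_i}+t)^{\kappa_i}\exp\{\varepsilon|\kappa|\,d_\TT(x,y)^2/t\}$, and then $\varepsilon$ must be taken small relative to the Gaussian constant already extracted. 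This is the heart of the paper's verification (the two-sided estimate \eqref{est-At}) and is absent from your plan; without it the ``careful case analysis'' you defer to will not close. Your lower bound is fine in spirit (and easier, since $\sqrt{x_iy_i}+t\le 2(x_i+t)^{1/2}(y_i+t)^{1/2}$ costs no Gaussian factor), but note that the Jacobi lower bound holds for all arguments, so no restriction of $z$ to a special regime is needed -- one only has to remove the factor $\big(1+(1-z^2)/t\big)^{\lambda}$ coming from $V(2z^2-1,\sqrt t)$ by slightly decreasing the Gaussian constant.
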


\subsection{Method of proof and discussion}\label{subsec:method}

We shall prove Theorems~\ref{thm:Gauss-ball} and \ref{thm:Gauss-simpl}
by using the known two-sided Gaussian bounds on the Jacobi heat kernel on $[-1, 1]$.
We~next describe this result.
The classical Jacobi operator is defined by
\begin{equation}\label{def-Jacobi}
L_{\alpha, \beta}f(x):=\frac{\big[w_{\alpha,\beta}(x)(1-x^2)f'(x)\big]'}{w_{\alpha, \beta}(x)},
\end{equation}
where
\begin{equation*}
w_{\alpha, \beta}(x):=(1-x)^{\alpha}(1+x)^{\beta}, \quad \alpha, \beta>-1.
\end{equation*}
We consider $L_{\alpha, \beta}$ with domain $D(L):=\CP[-1, 1]$ the set of all algebraic polynomials restricted to $[-1, 1]$.
We also consider $[-1, 1]$ equipped with the weighted measure
\begin{equation}\label{mes-int}
d\nu_{\alpha, \beta}(x) := w_{\alpha, \beta}(x) dx = (1-x)^{\alpha}(1+x)^{\beta} dx
\end{equation}
and the distance
\begin{equation}\label{dist-dist}
\rho(x,y) := |\arccos x - \arccos y|.
\end{equation}
It is not hard to see that the Jacobi operator $L_{\alpha, \beta}$ in the setting described above
is essentially self-adjoint and $-L_{\alpha, \beta}$  is positive in $L^2([-1, 1], w_{\alpha, \beta})$.

We shall use the notation
\begin{equation}\label{int-ball}
B(x, r):=\{y\in [-1,1]: \rho(x, y)<r\} \quad\hbox{and}\quad V(x, r):=\nu_{\alpha, \beta}(B(x, r)).
\end{equation}
As is well known (see e.g. \cite[(7.1)]{CKP})
\begin{equation}\label{measure-ball}
V(x, r)\sim r(1-x+r^2)^{\alpha+1/2}(1+x+r^2)^{\beta+1/2}, \quad x\in [-1, 1], \; 0<r\le \pi.
\end{equation}

It is well known \cite{Sz} that the Jacobi polynomials $P_n^{(\alpha, \beta)}$, $n= 0, 1, \dots$,
are eigenfunctions of the operator $L_{\alpha, \beta}$, namely,
\begin{equation}\label{Jacobi-eigenv}
L_{\alpha, \beta}P_n^{(\alpha, \beta)}= -n(n+\alpha+\beta+1)P_n^{(\alpha, \beta)}, \quad n=0, 1, \dots.
\end{equation}
We consider the Jacobi polynomials $\big\{P_n^{(\alpha, \beta)}\big\}$ normalised in $L^2([-1,1], w_{\alpha, \beta})$.
Then the Jacobi heat kernel $e^{tL_{\alpha, \beta}}(x,y)$, $t>0$, takes the form
\begin{equation}\label{Jacobi-HK}
e^{tL_{\alpha, \beta}}(x,y)=\sum_{n=0}^\infty e^{-tn(n+\lambda)}P_n^{(\alpha, \beta)}(x)P_n^{(\alpha, \beta)}(y),
\quad \lambda:=\alpha+\beta+1.
\end{equation}

\begin{theorem}\label{thm:Gauss-int}
For any $\alpha, \beta>-1$
there exist constants $c_1, c_2, c_3, c_4 >0$ such that for all $x,y \in [-1,1]$ and $t>0$
\begin{equation}\label{gauss-int}
\frac{c_1\exp\{- \frac{\rho(x,y)^2}{c_2t}\}}{\big[V(x, \sqrt t)V(y, \sqrt t)\big]^{1/2}}
\le  e^{tL_{\alpha, \beta}}(x,y)
\le \frac{c_3\exp\{- \frac{\rho(x,y)^2}{c_4t}\}}{\big[V(x, \sqrt t)V(y, \sqrt t)\big]^{1/2}}.
\end{equation}
\end{theorem}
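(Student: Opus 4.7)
The plan is to reduce Theorem~\ref{thm:Gauss-int} to the general equivalence theorem of Sturm (refined by Grigor'yan and Saloff-Coste): on a complete Dirichlet metric measure space carrying a strongly local regular Dirichlet form, the two-sided Gaussian heat-kernel bounds are equivalent to the conjunction of volume doubling and a scale-invariant $L^2$ Poincar\'e inequality on intrinsic balls. So the first task is to identify the Dirichlet form, then to verify these two geometric hypotheses in the setting $([-1,1],\rho,w_{\alpha,\beta})$.

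Integration by parts on $\CP[-1,1]$ rewrites $L_{\alpha,\beta}$ as the generator of
\begin{equation*}
\mathcal{E}(f,g)=\int_{-1}^{1}(1-x^2)\,f'(x)\,g'(x)\,w_{\alpha,\beta}(x)\,dx,
\end{equation*}
with vanishing boundary contributions whenever $\alpha,\beta>-1$. I would check that $\mathcal{E}$ closes to a strongly local regular Dirichlet form on $L^2([-1,1],w_{\alpha,\beta})$ whose self-adjoint generator is the essentially self-adjoint extension of $L_{\alpha,\beta}$; the associated Markov semigroup then admits the spectral expansion (\ref{Jacobi-HK}) and a measurable heat kernel. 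The carr\'e du champ is $\Gamma(f,f)(x)=(1-x^2)f'(x)^2$, and the condition $\Gamma(f,f)\le 1$ reads $|f'(x)|\le(1-x^2)^{-1/2}$, which integrates to show that the intrinsic distance of $\mathcal{E}$ is exactly $\rho(x,y)=|\arccos x-\arccos y|$, matching (\ref{dist-dist}). The volume doubling property $V(x,2r)\le C(\alpha,\beta)\,V(x,r)$ is read off directly from (\ref{measure-ball}), since both factors $(1-x+r^2)^{\alpha+1/2}$ and $(1+x+r^2)^{\beta+1/2}$ respond to $r\mapsto 2r$ by a constant depending only on $\alpha,\beta$.

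The remaining ingredient is the local Poincar\'e inequality
\begin{equation*}
\int_{B(x,r)}|f-f_{B(x,r)}|^{2}\,w_{\alpha,\beta}\,dy
\le C\,r^{2}\int_{B(x,r)}(1-y^{2})|f'(y)|^{2}\,w_{\alpha,\beta}\,dy,
\end{equation*}
with $C$ independent of $x$ and $r$. After the substitution $y=\cos\theta$ (an isometry for the intrinsic metric) this becomes a one-dimensional weighted Poincar\'e inequality on intervals of $[0,\pi]$ with the doubling weight $(\sin(\theta/2))^{2\alpha+1}(\cos(\theta/2))^{2\beta+1}$ and the Euclidean metric. I would split into three regimes --- intrinsic balls bounded away from $\{\pm1\}$, balls adjacent to a single endpoint, and balls covering an endpoint --- and in each regime reduce the inequality to a standard one-variable Hardy/Muckenhoupt estimate, with constants depending only on $\alpha,\beta$. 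Once doubling and Poincar\'e are both in hand, the Sturm theorem delivers (\ref{gauss-int}) with constants $c_1,\dots,c_4$ depending only on the doubling and Poincar\'e constants, and the symmetric form $[V(x,\sqrt t)V(y,\sqrt t)]^{-1/2}$ is the one produced by the abstract theorem.

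The main obstacle is precisely the uniformity in the Poincar\'e step near the endpoints: the Euclidean scale of an intrinsic ball $B(x,r)$ is of order $r\sqrt{1-x^{2}}+r^{2}$, so the ball anisotropically shrinks as $|x|\to 1$, and one must keep the Hardy constants independent of how far inside $(-1,1)$ the ball sits. Alternative routes --- via the Koornwinder product formula for Jacobi polynomials, or via a Bakry-\'Emery $CD(0,N)$ bound --- would bypass the Poincar\'e verification but impose the restriction $\alpha,\beta\ge -1/2$, whereas the Dirichlet-form approach covers the full natural range $\alpha,\beta>-1$ required by the statement.
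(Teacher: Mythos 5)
The paper does not prove Theorem~\ref{thm:Gauss-int} itself: it quotes it as known, citing \cite[Theorem 7.2]{CKP} (and \cite[Theorem 5.1]{KPX}), and the route you propose --- realizing $L_{\alpha,\beta}$ as the generator of the strongly local regular Dirichlet form with carr\'e du champ $(1-x^2)f'(x)^2$, identifying the intrinsic metric with $\rho$, and verifying volume doubling together with a scale-invariant local Poincar\'e inequality so that the Sturm/Grigor'yan--Saloff-Coste equivalence delivers the two-sided Gaussian bounds with the symmetric volume factor --- is precisely the Dirichlet-space argument of that cited source. Your proposal is therefore correct in outline and takes essentially the same approach as the one the paper relies on for this statement.
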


This theorem is established in \cite[Theorem~7.2]{CKP} using a general result on heat kernels
in Dirichlet spaces with a doubling measure and local Poincar\'{e} inequality.
The same theorem is also proved in \cite[Theorem~5.1]{KPX}.
In \cite{NS} Nowak and Sj\"{o}gren obtained this result in the case when $\alpha, \beta \ge -1/2$
via a direct method using special functions.

For the proof of Theorem~\ref{thm:Gauss-ball} it will be critical that the kernel $P(w_\mu; x, y)$
of the orthogonal projector onto $\CV_n(w_\mu)$ in $L^2(\BB, w_\mu)$ has an explicit representation
in terms of the univariate Gegenbauer polynomials (see \eqref{rep-Pn-ball}-\eqref{rep-Pn-ball-2}).
For the proof of Theorem~\ref{thm:Gauss-simpl} we deploy the well known representation
of the kernel $P_n(w_\kappa; x, y)$ in terms of Jacobi polynomials (see \eqref{rep-Pn-simpl}).

It should be pointed out that our method of proof of estimates \eqref{gauss-ball} and \eqref{gauss-simplex}
works only in the range $\mu \ge 0$ for the weight parameter in the case of the ball
and in the range $\kappa_i\ge 0$, $i=1,\dots, n$, in the case of the simplex.
These restrictions on the range of the parameters are determined by the range
for the parameters in the representations of the kernels $P(w_\mu; x, y)$ and $P_n(w_\kappa; x, y)$.

Observe that the two-sided estimates on the heat kernels from \eqref{gauss-ball},\eqref{gauss-simplex}
coupled with the general results from \cite{CKP, KP}
entail smooth functional calculus in the settings on the ball and simplex (see \cite{IPX,PX2}),
in particular, the finite speed propagation property is valid.
For more details, see \cite[\S 3.1]{KPX}.

\section{Proof of Gaussian bounds for the heat kernel on the ball}
\label{sec:proof-ball}

We adhere to the notation from \S \ref{subsec:ball}.
Define
\begin{equation*}
D_{i,j}:=x_i\partial_j-x_j\partial_i,\quad 1 \le i \ne j \le d.
\end{equation*}
It is easy to see that
\begin{equation} \label{Dij-theta}
D_{i,j} = \partial_{\theta_{i,j}}
\quad \hbox{with} \quad
(x_i,x_j) = r_{i,j}(\cos \theta_{i,j},\sin \theta_{i,j}).
\end{equation}
Further, define the second order differential operators
$$
D^2_{i,i} := [w_\mu(x)]^{-1} \partial_i \left[(1-\|x\|^2) w_\mu(x) \right]\partial_i,
\quad 1 \le i \le d.
$$
It turns out that the differential operator $D_\mu$ from \eqref{D-mu} can be decomposed
as a~sum of second order differential operators \cite[Proposition 7.1]{DaiX}:
\begin{equation}\label{decomp}
  \CD_\mu = \sum_{i=1}^d D^2_{i,i} +  \sum_{1\le i < j \le d} D^2_{i,j}
      =  \sum_{1\le i \le j \le d} D^2_{i,j}.
\end{equation}

The basic properties of the operator $D_\mu$ are given in the following

\begin{theorem}\label{thm:prop-D}
For $f \in C^2(\BB^d)$ and $g \in C^1(\BB^d)$,
\begin{align}\label{rep-Dmu}
& \int_{\BB^d} \CD_\mu f(x) g(x) w_\mu(x) dx
\\
&= - \int_{\BB^d}
\Big[ \sum_{i =1}^d  \partial_i f (x)\partial_i g(x) (1-\|x\|)^2 +
       \sum_{1 \le i< j \le d} D_{i,j} f(x)D_{i,j} g(x) \Big] w_\mu(x) dx. \notag
\end{align}
Consequently, the operator $D_\mu$ is essentially self-adjoint
and $-D_\mu$ is positive in $L^2(\BB^d, w_\mu)$.
\end{theorem}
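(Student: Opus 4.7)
The plan is to establish \eqref{rep-Dmu} by integrating by parts separately in each summand of the decomposition \eqref{decomp}, after which symmetry, positivity, and essential self-adjointness follow in a standard way.

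\textbf{Diagonal terms.} For each $i$,
$$\int_{\BB^d}D^2_{i,i}f\cdot g\,w_\mu(x)\,dx=\int_{\BB^d}\partial_i\!\big[(1-\|x\|^2)w_\mu(x)\partial_i f\big]\,g(x)\,dx.$$
I would integrate by parts in $x_i$ along the slice $x_i\in\big[-\sqrt{1-\sum_{k\ne i}x_k^2},\sqrt{1-\sum_{k\ne i}x_k^2}\big]$. The boundary term carries the factor $(1-\|x\|^2)w_\mu(x)=(1-\|x\|^2)^{\mu+1/2}$, which vanishes at $\|x\|=1$ because $\mu\ge 0$ forces $\mu+\tfrac12>0$. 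The resulting diagonal contribution is $-\int_{\BB^d}(1-\|x\|^2)\partial_i f\,\partial_i g\,w_\mu(x)\,dx$.

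\textbf{Off-diagonal terms.} For $i<j$, pass to polar coordinates $(x_i,x_j)=r_{i,j}(\cos\theta_{i,j},\sin\theta_{i,j})$ in the $(x_i,x_j)$-plane, with Jacobian $r_{i,j}\,dr_{i,j}\,d\theta_{i,j}$. By \eqref{Dij-theta} we have $D_{i,j}=\partial_{\theta_{i,j}}$; and since $w_\mu$ depends on $x$ only through $\|x\|^2$, which is rotation-invariant in this plane, $\partial_{\theta_{i,j}}w_\mu\equiv 0$. For each fixed choice of the remaining coordinates with $\sum_{k\ne i,j}x_k^2<1$, the angle $\theta_{i,j}$ sweeps a full period $[0,2\pi]$ inside $\BB^d$, so integration by parts in $\theta_{i,j}$ has no boundary term and produces $-\int_{\BB^d}D_{i,j}f\cdot D_{i,j}g\,w_\mu(x)\,dx$. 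Summing the diagonal and off-diagonal identities according to \eqref{decomp} yields \eqref{rep-Dmu}.

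\textbf{Symmetry, positivity, essential self-adjointness.} The right-hand side of \eqref{rep-Dmu} is manifestly symmetric in $(f,g)$, so $\CD_\mu$ is a symmetric operator on any reasonable dense domain inside $L^2(\BB^d,w_\mu)$ on which it is defined; setting $f=g$ makes it non-positive, so $-\CD_\mu$ is positive. For essential self-adjointness I would take as domain the space $\CP$ of algebraic polynomials on $\BB^d$, which is dense in $L^2(\BB^d,w_\mu)$ and, by \eqref{eigen-ball}, decomposes as the orthogonal direct sum $\bigoplus_{n\ge 0}\CV_n(w_\mu)$ of finite-dimensional eigenspaces of $\CD_\mu$ with real eigenvalues $-n(n+d+2\mu-1)$. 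A symmetric operator whose domain contains a complete orthonormal basis of eigenvectors with real eigenvalues is essentially self-adjoint: indeed, $(\CD_\mu\pm i)$ maps an ONB of eigenvectors to a set with dense span, so the ranges of $\CD_\mu\pm i$ are dense, which is the classical criterion. The main technical hurdle is the integration by parts in the diagonal pieces, which depends crucially on the vanishing of $(1-\|x\|^2)^{\mu+1/2}$ on $\partial\BB^d$ and hence on the assumption $\mu\ge 0$; this matches the restriction on the weight parameter highlighted in \S\ref{subsec:method}. The off-diagonal identity is automatic from the rotational invariance of $w_\mu$, and the operator-theoretic consequences drop out of \eqref{rep-Dmu} together with the eigenspace decomposition.
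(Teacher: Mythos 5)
Your proposal is correct and follows essentially the paper's own route: integrate by parts term by term in the decomposition \eqref{decomp} (slices in $x_i$ for the diagonal terms, polar coordinates and periodicity in the $(x_i,x_j)$-plane for the off-diagonal ones), then use the polynomial eigenbasis of $\CV_n(w_\mu)$ to conclude symmetry, positivity and essential self-adjointness. The only minor differences are that for $d>2$ the paper reduces the off-diagonal computation to $\BB^2$ via the identity \eqref{B-B} where you apply Fubini directly, and that the paper establishes essential self-adjointness by explicitly constructing the closure $\overline{\CD}_\mu$ from the eigen-expansion rather than invoking, as you do, the equivalent dense-range criterion for $\CD_\mu\pm i$.
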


\begin{proof}
Applying integration by parts in the variable $x_i$ we obtain
\begin{align*}
  \int_{\BB^d} (D_{i,i}^2 f(x)) g(x) w_\mu(x) dx
   & = \int_{\BB^d}  \left( \partial_i  \left[ (1-\|x\|^2) w_\mu(x)
        \partial_i  f(x)  \right] \right) g(x) dx  \\
   & = - \int_{\BB^d}   \partial_i f (x)\partial_i g(x) (1-\|x\|)^2 w_\mu(x) dx.
\end{align*}
We now handle $D_{i,j}^2$. It is sufficient to consider $D_{1,2}$.
If $d =2$ we switch to polar coordinates and use \eqref{Dij-theta} and
integration by parts for $2\pi$-periodic functions to obtain
\begin{align*}
  \int_{\BB^2} (D_{i,j}^2 f(x)) g(x) w_\mu(x) dx
  & = \int_0^1 r (1-r^2)^{\mu-1} \int_{0}^{2\pi}( \partial_\theta^2 f) g d\theta dr \\
  & = - \int_0^1 r (1-r^2)^{\mu-1} \int_{0}^{2\pi}  \partial_\theta f \partial_\theta g d\theta dr \\
   & = - \int_{\BB^2} D_{i,j} f(x) D_{i,j} g(x) w_\mu(x) dx.
\end{align*}
In dimension $d > 2$ we apply the following integration identity that follows by
a~simple change of variables,
\begin{equation}\label{B-B}
\int_{\BB^d} f(x)dx  = \int_{\BB^{d-2}}
\Big[\int_{\BB^2} f\big(\sqrt{1-\|v\|^2}u, v\big) du \Big] (1-\|v\|^2) dv,
\end{equation}
and parametrizing the integral over $\BB^2$ by polar coordinates we arrive at
\begin{equation*}
\int_{\BB^d} (D_{i,j}^2 f(x)) g(x) w_\mu(x) dx = -\int_{\BB^d} D_{i,j} f(x) D_{i,j} g(x) w_\mu(x) dx.
\end{equation*}
The above identities imply \eqref{rep-Dmu}.

\smallskip

We consider the operator $\CD_\mu$ with domain $D(\CD_\mu)= \CP(\BB^d)$ the set of all polynomial on $\BB^d$,
which is obviously dense in $L^2(\BB^d, w_\mu)$.
From \eqref{rep-Dmu} it readily follows that the operator $\CD_\mu$ is symmetric and $-\CD_\mu$ is positive.

We next show that the operator $\CD_\mu$ is essentially self-adjoint,
that is, the completion $\overline{\CD}_\mu$ of the operator $\CD_\mu$ is self-adjoint.
Let $\{P_{nj}: j=1, \dots, \dim \CV_n\}$ be an orthonormal basis of $\CV_n=\CV_n(w_\mu)$ consisting of real-valued polynomials.
Clearly
\begin{equation*}
D(\CD_\mu)=\Big\{f= \sum_{n, j} a_{nj} P_{nj}: \; a_{nj}\in\RR,
\;\;\{a_{nj}\}\;\;\hbox{compactly supported}\Big\}, \;\;\hbox{and}
\end{equation*}
\begin{equation*}
\CD_\mu f= -\sum_{n, j} a_{nj} n(n+2\lambda) P_{nj}
\quad \hbox{if}\quad f= \sum_{j} a_{nj} P_{nj}\in D(\CD_\mu).
\end{equation*}
We define $\overline{\CD}_\mu$ and its domain $D(\overline{\CD}_\mu)$ by
\begin{equation*}
D(\overline{\CD}_\mu):=\Big\{f= \sum_{n=0}^\infty\sum_{j=1}^{\dim \CV_n} a_{nj} P_{nj}:
\; \sum_{n,j} |a_{nj}|^2 <\infty,\;\;\sum_{n,j} |a_{nj}|^2 (n(n+2\lambda))^2<\infty \Big\}
\end{equation*}
and
\begin{equation*}
\overline{\CD}_\mu f:= -\sum_{n,j} a_{nj} n(n+2\lambda) P_{nj}
\quad \hbox{if}\quad f= \sum_{n, j} a_{nj} P_{nj}\in D(\overline{\CD}_\mu).
\end{equation*}
It is easily to show that $\overline{\CD}_\mu$ is the closure of $\CD_\mu$
and that $\overline{\CD}_\mu$ is self-adjoint.
\end{proof}

\begin{remark}\label{rem:green-ball}
Identity \eqref{rep-Dmu} is the weighted Green's formula on $\BB^d$ $($see \cite{KPX}$)$.
\end{remark}

\begin{proof}[Proof of Theorem~\ref{thm:Gauss-ball}]
We shall assume that $0<t\le 1$. In the case $t>1$ the Gaussian bounds \eqref{gauss-ball}
obviously follow from \eqref{gauss-ball} in the case $t=1$.

It is known (see \cite[Thm. 5.2.8]{DX}) that for $\mu > 0$ the kernel $P_n(w_\mu;x,y)$ of
the orthogonal projector onto $\CV_n(w_\mu)$ in $L^2(\BB, w_\mu)$
has the representation
\begin{equation}\label{rep-Pn-ball}
P_n(w_\mu;x,y) = c_\lambda\frac{n+\lambda}{\lambda}
\int_{-1}^1 C_n^\lambda \left(\langle x, y\rangle + u \sqrt{1-\|x\|^2}\sqrt{1-\|y\|^2}\right)
       (1-u^2)^{\mu-1} du,
\end{equation}
where $C_n^\lambda$ is the Gegenbauer polynomial of degree $n$
and $c_\lambda>0$ is a constant depending only on $\lambda$ and $d$.
The Gegenbauer polynomials $\{C_n^\lambda\}$ are orthogonal in the weighted space $L^2([-1, 1], w_\lambda)$
with $w_\lambda(u) := (1-u^2)^{\lambda-1/2}$
and can be defined by the generating function
\begin{equation*}
(1-2uz-z^2)^{-\mu}=\sum_{n=0}^\infty C_n^\lambda(u)z^n, \quad |z|<1, \; |u|<1.
\end{equation*}
Using that $C_n^\lambda(1)=\binom{n+2\lambda-1}{n}$ it is easy to show that
\begin{equation}\label{geg}
\int_{-1}^{1}|C_n^\lambda(u)|^2w_\lambda(u)du=\frac{\lambda}{n+\lambda}C_n^\lambda(1).
\end{equation}
In the limiting case $\mu = 0$ the representation of $P_n(w_\mu;x,y)$ takes the form
\begin{align}\label{rep-Pn-ball-2}
 P_n(w_0; x,y) = c_d\frac{\lambda+n}{\lambda} & \left[ C_n^\lambda
   \left(\langle x,y\rangle +\sqrt{1-\|x\|^2} \sqrt{1-\|y\|^2}\right) \right .\\
& \left . + C_n^\lambda \left (\langle x,y\rangle - \sqrt{1-\|x\|^2} \sqrt{1-\|y\|^2} \right)\right]. \notag
\end{align}

If $\alpha=\beta=\lambda-1/2$ we denote the Jacobi operator by
$L_\lambda:=L_{\lambda-1/2, \lambda-1/2}$
and we have
$L_\lambda f(x) = (1-x^2) f''(x) - (2\lambda+1) f'(x)$.
We denote by  $e^{tL_\lambda}(u,v)$ the Jacobi heat kernel in this case
and by \eqref{Jacobi-HK} and \eqref{geg} we obtain
\begin{equation}\label{gegen-HK}
e^{tL_\lambda}(u,v)
= \sum_{n =0}^\infty e^{-tn(n+2\lambda)}\frac{n+\lambda}{\lambda}\frac{C_n^\lambda(u) C_n^\lambda(v)}{C_n^\lambda(1)},
\quad \lambda:=\mu+(d-1)/2.
\end{equation}

Assume $\mu>0$.
The above, \eqref{ball-HK}, and \eqref{rep-Pn-ball} lead to the representation
\begin{equation} \label{h-kernel-ball2}
e^{t \CD_\mu}(x,y) = c_\lambda\int_{-1}^1 e^{t L_\lambda} \left(1,\langle x, y\rangle
+ u \sqrt{1-\|x\|^2}\sqrt{1-\|y\|^2}\right)(1-u^2)^{\mu-1} du.
\end{equation}

Note that in the case of Gegenbauer polynomials ($\alpha = \beta = \lambda -1/2$)
by \eqref{measure-ball} it follows that
$V(x,r) \sim r( 1-x^2 + r^2)^\lambda$, $-1 \le x \le 1$,
and hence
\begin{equation}\label{V-ball}
V(1,\sqrt{t})\sim t^{\lambda+1/2}
\quad\hbox{and}\quad
V(z,\sqrt{t}) \sim t^{\lambda+1/2}(1+(1-z^2)/t)^\lambda,
\quad |z|\le 1.
\end{equation}
If $x = \cos \theta$, then $1-x =2 \sin^2 \frac{\theta}{2} \sim \theta^2$
and hence
\begin{equation}\label{dist-1z}
\rho(1,z) = |\arccos 1 - \arccos z| = \arccos z \sim \sqrt{1-z}, \quad -1\le z\le 1.
\end{equation}
From this, \eqref{h-kernel-ball2}, \eqref{gauss-int}, and \eqref{V-ball} we obtain
\begin{equation} \label{main-est}
e^{t \CD_\mu}(x,y)
\le c_1\int_{-1}^1 \frac{\exp \big\{-\frac{1-z(u;x,y)}{c_2t}\big\}}
{t^{\lambda+1/2}\big(1+  \frac{1-z(u;x,y)^2}{t}\big)^\lambda}(1-u^2)^{\mu-1} du
\end{equation}
and
\begin{equation} \label{main-est1}
e^{t \CD_\mu}(x,y)
\ge c_3\int_{-1}^1 \frac{\exp\big\{-\frac{1-z(u;x,y)}{c_4t}\big\}}
{t^{\lambda+1/2}\big(1+  \frac{1-z(u;x,y)^2}{t}\big)^\lambda}(1-u^2)^{\mu-1} du,
\end{equation}
where $z(u; x,y) := \langle x, y\rangle + u\sqrt{1-\|x\|^2}\sqrt{1-\|y\|^2}$.

Since $1 + b \le e^{b}$ for $b \ge 0$, we have
$$
1 \le \Big(1+  \frac{1-z^2}{t}\Big)^\lambda \le  \Big(1+ 2\frac{1-z}{t}\Big)^\lambda
\le \exp\Big\{2\lambda \frac{1-z}{t}\Big\},
\quad |z|\le 1.
$$
Therefore, by replacing the constant $c_4$ in \eqref{main-est1} by a smaller constant $c_4'$ we get
\begin{equation} \label{main-est2}
e^{t \CD_\mu}(x,y)
\ge \frac{c_3'}{t^{\lambda+1/2}}\int_{-1}^1 \exp\Big\{-\frac{1-z(u;x,y)}{c_4't}\Big\}
(1-u^2)^{\mu-1} du.
\end{equation}
Obviously, from \eqref{main-est} it follows that
\begin{equation} \label{main-est3}
e^{t \CD_\mu}(x,y)
\le \frac{c_1}{t^{\lambda+1/2}}\int_{-1}^1\exp\Big\{-\frac{1-z(u;x,y)}{c_2t}\Big\}
(1-u^2)^{\mu-1} du.
\end{equation}

We have
\begin{align*}
1-z(u;x,y) =  1- \langle x, y\rangle - \sqrt{1-\|x\|^2}\sqrt{1-\|y\|^2}
    + (1-u)\sqrt{1-\|x\|^2}\sqrt{1-\|y\|^2}
\end{align*}
and using the definition of $d_\BB(x,y)$ in \eqref{dist-ball} we get
$$
 1- z(1;x,y) = 1- \cos d_\BB(x,y) = 2 \sin^2 \frac{d_\BB(x,y)}{2} \sim d_\BB(x,y)^2.
$$
Hence,
$$
  1-z(u;x,y) \sim d_\BB(x,y)^2 + (1-u) H(x,y), \quad H(x,y): =    \sqrt{1-\|x\|^2}\sqrt{1-\|y\|^2}.
$$
Consequently,
$$
\exp\Big\{-\frac{1-z(u;x,y)}{c_2t}\Big\}
\le \exp\Big\{-\frac{d_\BB(x,y)^2}{c't}\Big\}
\exp\Big\{-\frac{(1-u)H(x,y)}{c't}\Big\}
$$
and
$$
\exp\Big\{-\frac{1-z(u;x,y)}{2c_4t}\Big\}
\ge \exp\Big\{-\frac{d_\BB(x,y)^2}{c''t}\Big\}
\exp\Big\{-\frac{(1-u)H(x,y)}{c''t}\Big\}.
$$
These two inequalities along with \eqref{main-est2}-\eqref{main-est3}
imply that in order to obtain the two-sided Gaussian bounds in \eqref{gauss-ball}
it suffices to show that the quantity
\begin{equation}\label{def-At}
A_t(x,y): =\frac{1}{t^{\lambda+1/2}} \int_{-1}^1\exp\Big\{-\frac{(1-u)H(x,y)}{ct}\Big\}
(1-u^2)^{\mu-1}du
\end{equation}
satisfies the following inequalities, for any $\varepsilon>0$,
\begin{align}\label{key-est}
\frac{c^\star}{\big[V_\BB(x,\sqrt{t})V_\BB(y,\sqrt{t})\big]^{1/2}}
\le A_t(x,y)
\le \frac{c^{\star\star}\exp\Big\{\varepsilon\frac{d_\BB(x,y)^2}{t}\Big\}}{\big[V_\BB(x,\sqrt{t})V_\BB(y,\sqrt{t})\big]^{1/2}}.
\end{align}
Here the constant $c^{\star\star}>0$ depends on $\varepsilon$.

\medskip

\noindent
{\em Lower bound estimate.}
First, assume that $H(x,y)/ t \ge 1$. Then we have
\begin{align}\label{lower-est}
A_t(x,y)
& \ge \frac{\tilde{c}}{t^{\lambda+1/2}}\int_0^1\exp\Big\{-\frac{(1-u)H(x,y)}{ct}\Big\}(1-u)^{\mu-1} du \notag
\\
& =\frac{\tilde{c}t^\mu}{t^{\lambda+1/2}H(x,y)^\mu}\int_0^{H(x,y)/t}v^{\mu-1}e^{-v/c}dv
\\
& \ge \frac{c_* }{t^{d/2}H(x,y)^\mu} \quad \hbox{with}
\quad  c_*=\tilde{c}\int_0^1 v^{\mu-1} e^{-v/c} dv, \notag
\end{align}
where we applied the substitution $v=(1-u)H(x, y)/t$ and used that $\lambda +1/2 = \mu + d/2$.
However,
by \eqref{V-ball-ball},  $V_\BB(x,r) \ge cr^d(1-\|x\|^2)^\mu$, which implies
$$
t^{d/2} H(x,y)^\mu = t^{d/2} \big(\sqrt{1-\|x\|^2} \sqrt{1-\|y\|^2}\big)^\mu
\le \big[V_\BB(x,\sqrt{t})V_\BB(y,\sqrt{t}\big]^{1/2}.
$$
Putting the above together we conclude that $A_t(x,y)$ obeys the lower bound in \eqref{key-est} in this case.

Now, assume that $H(x,y)/t \le 1$.
Then $\exp \big\{-\frac{(1-u)H(x,y)}{ct}\big\} \ge e^{-1/c}$
and we have
$$
A_t(x,y)\ge \frac{\tilde{c}}{t^{\lambda+1/2}} \ge \frac{c^\star}{\left[V_\BB(x,\sqrt{t})V_\BB(y,\sqrt{t})\right]^{1/2}}.
$$
Here we used that, by \eqref{V-ball-ball}, $V_\BB(x,r) \ge cr^{d+2\mu} = cr^{2\lambda+1}$.
Thus, $A_t(x,y)$ again obeys the lower bound estimate in \eqref{key-est}
and this completes its proof.

\medskip

\noindent
{\em Upper bound estimate.}
Obviously $\exp\big\{-\frac{(1-u)H(x,y)}{ct}\big\} \le 1$
and hence
\begin{equation}\label{At-upper1}
A_t(x,y) \le \frac{c_*}{t^{\lambda+1/2}}= \frac{c_*}{t^{d/2+\mu}}. 
\end{equation}

We shall obtain another estimate on $A_t(x,y)$ by breaking the integral in \eqref{def-At} 
into two parts: one over $[0,1]$ and the other over $[-1,0]$.
Just as in \eqref{lower-est} applying the substitotion $v=(1-u)H(x,y)/t$ we obtain
\begin{align*}
\frac{1}{t^{\lambda+1/2}} \int_0^1\exp\Big\{-\frac{(1-u)H(x,y)}{ct}\Big\}(1-u^2)^{\mu-1}du
\le \frac{c^*\max\{1, 2^{\mu-1}\}}{t^{d/2}H(x,y)^\mu}
\end{align*}
with
$c^*=\int_0^\infty v^{\mu-1} e^{-v/c} dv$.
Here we used that $(1+u)^{\mu-1}\le \max\{1, 2^{\mu-1}\}$.

For the integral over $[-1,0]$ we use the fact that $1-u \ge 1$ for $u \in [-1,0]$
to obtain
\begin{align*}
\frac{1}{t^{\lambda+1/2}} \int_{-1}^0\exp\Big\{-\frac{(1-u)H(x,y)}{ct}\Big\}(1-u^2)^{\mu-1}du
&\le \frac{c_*}{t^{\lambda+1/2}}\exp\Big\{-\frac{H(x,y)}{ct}\Big\}
\\
\le \frac{\tilde{c}}{t^{\lambda+1/2}}\Big(\frac{t}{H(x, y)}\Big)^\mu
&= \frac{\tilde{c}}{t^{\lambda+1/2}H(x, y)^\mu}.
\end{align*}
Here we used that $v^\mu \le \lfloor \mu+1\rfloor! e^v$, $\forall v>0$,
and $\lambda=\mu+(d-1)/2$.

Together, the above inequalities imply
\begin{equation}\label{At-upper2}
    A_t(x,y) \le \frac{c^*}{t^{d/2} H(x,y)^\mu}.
\end{equation}
In turn, \eqref{At-upper1} and \eqref{At-upper2} yield
\begin{equation}\label{At-upper}
A_t(x,y) \le \frac{c_\diamond}{t^{d/2} (t + H(x,y))^\mu}.
\end{equation}

It remains to show that the above estimate implies the upper bound estimate in \eqref{key-est}.
To this end we need the following simple inequalities:
\begin{equation} \label{elementary}
(u+a) (u+b) \le 3 (u^2 +a b) (1+u^{-1} |a-b|), \quad a, b \ge 0, \;\; 0 < u \le 1,
\end{equation}
(see, e.g. \cite[(2.21]{PX1}) and (see \cite[(4.9)]{PX2})
$$
\big|\sqrt{1-\|x\|^2}- \sqrt{1-\|y\|^2}\big| \le \sqrt{2} d_\BB(x,y), \quad x,y \in \BB^d.
$$
Together, these two inequalities yield
\begin{equation}\label{combo}
\big(\sqrt{t} + \sqrt{1-\|x\|^2}\big)\big(\sqrt{t} + \sqrt{1-\|y\|^2}\big)
\le c \big(t + H(x,y)\big)\Big(1+ \frac{d_\BB(x,y)}{\sqrt{t}}\Big).
\end{equation}
Evidently $1+u\le \varepsilon^{-1}e^{\varepsilon u}$ for $u\ge 0$ and $0<\varepsilon\le 1$,
and hence
\begin{equation}\label{ineq-3}
(1+b)^\mu \le 2^\mu(1+b^2)^{\mu/2} \le 2^\mu \varepsilon^{-\mu/2} e^{\mu\varepsilon b^2},
\quad \forall b\ge 0, \; 0<\varepsilon\le 1.
\end{equation}
Also, from \eqref{V-ball-ball} it follows that
$V_\BB(x, r)\sim r^d\big(r+\sqrt{1-\|x\|^2}\big)^{2\mu}$.
From this, \eqref{combo}, and \eqref{ineq-3} it follows that
\begin{align*}
\big[V_\BB(x,\sqrt{t})V_\BB(y,\sqrt{t})\big]^{1/2}
& \le c t^{d/2} \big(\sqrt{t} + \sqrt{1-\|x\|^2}\big)^\mu \big(\sqrt{t} + \sqrt{1-\|y\|^2}\big)^\mu
\\
& \le c_\varepsilon t^{d/2}\big(t + H(x,y)\big)^\mu \exp\Big\{\mu\varepsilon \frac{d_\BB(x,y)^2}{t}\Big\}.
\end{align*}
In turn, this and \eqref{At-upper} yield the upper bound estimate in \eqref{key-est}.

\smallskip

We next consider the case when $\mu=0$.
Now, \eqref{ball-HK}, \eqref{rep-Pn-ball-2}, and \eqref{gegen-HK} yield the representation
\begin{align}\label{rep-D0}
e^{t \CD_0}(x,y)
&= c_\lambda e^{t L_\lambda} \left(1,\langle x, y\rangle + \sqrt{1-\|x\|^2}\sqrt{1-\|y\|^2}\right)
\\
&+ c_\lambda e^{t L_\lambda} \left(1,\langle x, y\rangle - \sqrt{1-\|x\|^2}\sqrt{1-\|y\|^2}\right).\notag
\end{align}
From this point on the proof follows in the footsteps of the proof when $\mu>0$ from above,
but is much simpler because the integral in \eqref{h-kernel-ball2} is replaced in \eqref{rep-D0} by two terms.
We omit the further details.
The proof of Theorem~\ref{thm:Gauss-ball} is complete.
\end{proof}

\section{Proof of Gaussian bounds for the heat kernel on the simplex}\label{sec:simplex}

In this part we adhere to the notation from \S\ref{subsec:simplex}.
The differential operator $\CD_\kappa$ from \eqref{def-D-simplex} can be written in the more symmetric form
\begin{equation}\label{rep-D-simplex}
\CD_\kappa = \sum_{i=1}^d U_{i} + \sum_{1\le i < j \le d} U_{i,j},
\end{equation}
where, with the notation $\partial_{i,j} : = \partial_i - \partial_j$,
\begin{align*}
  U_{i} & := \frac{1}{w_k(x)} \partial_i (x_i(1-|x|) w_\kappa(x)) \partial_i,\\
  U_{i,j} & := \frac{1}{w_k(x)} \partial_{i,j} (x_i x_j w_\kappa(x)) \partial_{i,j}, \quad 1 \le i \le d.
\end{align*}
This decomposition was first established in \cite{BSX} for $w_\kappa(x) = 1$ and later used in \cite{Diz} for $w_\kappa$.
It is easy to verify it directly.
The following basic property of the operator $\CD_\kappa$ follows immediately from \eqref{rep-D-simplex} by integration by parts:

\begin{proposition}
For any $f \in C^2(\TT^d)$ and $g \in C^1(\TT^d)$,
\begin{align}\label{diver-simplex}
\int_{\TT^d} \CD_\kappa f(x) \cdot g(x) w_\kappa(x) dx
=&-\int_{\TT^d}\Big[ \sum_{i =1}^d \partial_i f (x) \partial_i g (x) x_i (1-|x|)
\\
&+ \sum_{1 \le i \le j \le d} \partial_{i,j} f (x) \partial_{i,j} g (x) x_i x_j \Big] w_\kappa(x) dx.\notag
\end{align}
\end{proposition}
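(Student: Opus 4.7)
My plan is to start from the symmetric decomposition \eqref{rep-D-simplex} and integrate by parts against $g\,w_\kappa$ in each summand $U_i$ and $U_{i,j}$ separately. The target reduces to the two identities
\begin{equation*}
\int_{\TT^d} U_i f(x)\, g(x)\, w_\kappa(x)\, dx = -\int_{\TT^d} x_i(1-|x|)\, \partial_i f(x)\, \partial_i g(x)\, w_\kappa(x)\, dx,
\end{equation*}
\begin{equation*}
\int_{\TT^d} U_{i,j} f(x)\, g(x)\, w_\kappa(x)\, dx = -\int_{\TT^d} x_i x_j\, \partial_{i,j} f(x)\, \partial_{i,j} g(x)\, w_\kappa(x)\, dx,
\end{equation*}
after which summing over $1\le i\le d$ and $1\le i<j\le d$ via \eqref{rep-D-simplex} gives \eqref{diver-simplex}. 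The $i=j$ terms in the sum of \eqref{diver-simplex} drop out since $\partial_{i,i}=0$.

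For $U_i$ the definition rewrites as $U_i f(x)\, w_\kappa(x)=\partial_i\bigl(x_i(1-|x|)\, w_\kappa(x)\, \partial_i f(x)\bigr)$, so I would integrate by parts once in the single variable $x_i$, freezing the remaining coordinates. The $x_i$-range is $[0,\,1-\sum_{k\ne i}x_k]$, and at both endpoints the prefactor carries, after combining with the corresponding powers in $w_\kappa$, the factor $x_i^{\kappa_i+1/2}(1-|x|)^{\kappa_{d+1}+1/2}$. Both exponents are positive since $\kappa_i,\kappa_{d+1}\ge 0$, so the boundary contributions vanish and the first identity follows.

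The off-diagonal piece $U_{i,j}$ is where I expect the main obstacle: $\partial_{i,j}=\partial_i-\partial_j$ is tangent to the face $|x|=1$, and $w_\kappa$ does not vanish on it in general, so a naive integration by parts in $x_i$ and $x_j$ one at a time leaves uncancelled boundary contributions from that face. My workaround is the affine change of variables $u=x_i-x_j$, $v=x_i+x_j$ in the $(x_i,x_j)$-plane with the other coordinates held fixed. In these variables $\partial_{i,j}=2\partial_u$ and $x_ix_j=(v^2-u^2)/4$, and after absorbing $x_i^{\kappa_i-1/2}x_j^{\kappa_j-1/2}$ into $w_\kappa$ the prefactor $x_ix_j\,w_\kappa$ carries factors of the form $((v+u)/2)^{\kappa_i+1/2}((v-u)/2)^{\kappa_j+1/2}$, which vanish at the $u$-endpoints $\pm v$ (the locus $x_j=0$ respectively $x_i=0$). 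The constraint $|x|\le 1$ becomes a condition on $v$ and the frozen variables and is invisible to the $u$-integration. A single integration by parts in $u$ therefore produces no boundary term and yields the second identity.

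Summing the two displayed identities through \eqref{rep-D-simplex} gives \eqref{diver-simplex}. The only ingredients used are $\kappa_i\ge 0$ for $i=1,\dots,d+1$ (to kill the boundary terms) and the regularity $f\in C^2(\TT^d)$, $g\in C^1(\TT^d)$ stated in the proposition; if desired, the integration by parts on the interior $\{x\in\TT^d:\min_k x_k>\varepsilon,\ |x|<1-\varepsilon\}$ followed by a routine $\varepsilon\to 0$ limit can be used to justify the manipulations rigorously.
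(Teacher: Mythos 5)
Your proof is correct and takes essentially the same route as the paper, which simply asserts that \eqref{diver-simplex} follows from the decomposition \eqref{rep-D-simplex} by integration by parts — exactly what you carry out, with the boundary terms killed by the vanishing of $x_i^{\kappa_i+1/2}(1-|x|)^{\kappa_{d+1}+1/2}$, respectively $x_i^{\kappa_i+1/2}x_j^{\kappa_j+1/2}$, since $\kappa_i\ge 0$. Your explicit treatment of the $U_{i,j}$ terms via the substitution $u=x_i-x_j$, $v=x_i+x_j$, under which $\partial_{i,j}=2\partial_u$ and the face $|x|=1$ never appears as a boundary of the one-dimensional integration, is a sound way of filling in the detail the paper leaves implicit.
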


Observe that identity \eqref{diver-simplex} is the weighted Green's formula on the simplex $\TT^d$ (see \cite{KPX}).

We consider the operator $\CD_\kappa$ defined on the set $D(\CD_\kappa)=\CP(\TT^d)$ of all algebraic
polynomials on $\TT^d$, which is obviously dense in $L^2(\TT^d, w_\kappa)$.
From \eqref{diver-simplex} it readily follows that the operator $\CD_\kappa$ is
symmetric and $-\CD_\kappa$ is positive in $L^2(\TT^d, w_\kappa)$.
Furthermore, just as in the proof of Theorem~\ref{thm:prop-D} it follows that the operator $\CD_\kappa$ is essentially self-adjoint.

\smallskip

\begin{proof}[Proof of Theorem~\ref{thm:Gauss-simpl}]
We may assume that $0<t\le 1$, because the case $t>1$ follow immediately from the case $t=1$.

Recall that we consider in this article the Jacobi polynomials $P_n^{(\alpha, \beta)}$, $n=0,1, \dots$,
normalized in $L^2([-1,1], w_{\alpha, \beta})$.
It is known (see \cite[Theorem 5.3.4]{DX}) that if all $\kappa_i > 0$ the kernel $P_n(w_\mu;x,y)$
of the orthogonal projector onto $\CV_n(w_\kappa)$ in $L^2(\TT^d, w_\kappa)$ has the following representation
\begin{align}\label{rep-Pn-simpl}
P_n(w_\mu;x,y) = & c_\kappa P_n^{(\lambda -\frac 12,-\frac 12)}(1) \notag
\\
&\times \int_{[-1,1]^{d+1}}P_n^{(\lambda -\frac 12,-\frac 12)} \left(2z(u;x,y)^2-1\right)
\prod_{i=1}^{d+1}  (1-u_i^2)^{\kappa_i} du,
\end{align}
where
\begin{equation*} 
z(u;x,y) := \sum_{k=1}^{d+1} u_i\sqrt{x_iy_i}, \;\; x_{d+1} := 1-|x|, \;\; y_{d+1} := 1-|y|, \;\; \lambda:=|\kappa|+(d-1)/2,
\end{equation*}
in which $|x| = x_1+ \ldots + x_d$. In the case when some or all $\kappa_i =0$, this identity holds under the
limit $\kappa_i \to 0$, using that
$$
\lim_{\kappa \to 0+}   \frac{\int_{-1}^1 f(x) (1-x^2)^{\kappa - 1} dx }{\int_{-1}^1 (1-x^2)^{\kappa - 1} dx}
= \frac12  \left[f(1) + f(-1) \right].
$$

Assume $\kappa_i>0$, $i=1, \dots, n+1$. Combining \eqref{simplex-HK}, \eqref{Jacobi-HK}, and
\eqref{rep-Pn-simpl} we obtain the representation
\begin{equation} \label{h-kernel-simplex2}
e^{t\CD_\kappa}(x,y) = c_\kappa\int_{[-1,1]^{d+1}}
e^{tL_{\lambda -\frac 12,-\frac 12}} \left(1, 2z(u;x,y)^2-1\right)\prod_{i=1}^{d+1}  (1-u_i^2)^{\kappa_i} du.
\end{equation}
Note that from \eqref{def-dist-simpl} we have
$\sum_{k=1}^{d+1} \sqrt{x_iy_i}=\cos d_\TT(x, y)$
and hence $|z(u;x,y)|\le 1$.
Just as in \eqref{dist-1z} we obtain
\begin{equation}\label{rho-1z}
\rho(1, 2z^2 -1):= |\arccos 1 - \arccos (2z^2 -1)|
 \sim \sqrt{1 - (2z^2 -1)} \sim \sqrt{1-z^2}.
\end{equation}
On the other hand, with $\alpha= \lambda-1/2$ and $\beta=-1/2$ we infer from \eqref{measure-ball} that
$ V(x, r)\sim r(1-x+r^2)^\lambda$ and hence
\begin{equation*}
V(1, \sqrt{t})\sim t^{\lambda+1/2}
\;\;\hbox{and}\;\;
V(2z^2-1, \sqrt{t})\sim t^{1/2}\big(t + 2(1-z^2)\big)^{\lambda}
\sim t^{\lambda+1/2}\Big(1 + \frac{1-z^2}{t}\Big)^{\lambda}.
\end{equation*}
We use these equivalences, \eqref{h-kernel-simplex2}, \eqref{gauss-int}, and \eqref{rho-1z} to obtain
\begin{equation} \label{main-est-TT}
e^{t\CD_\kappa}(x,y)
\le c_1\int_{[-1,1]^{d+1}}\frac{\exp \big\{-\frac{1-z(u;x,y)^2}{c_2t}\big\}}
{t^{\lambda+1/2}\big(1+\frac{1-z(u;x,y)^2}{t}\big)^\lambda} \prod_{i=1}^{d+1} (1-u_i^2)^{\kappa_i-1} du
\end{equation}
and
\begin{equation} \label{main-est1-TT}
e^{t\CD_\kappa}(x,y)
\ge c_3\int_{[-1,1]^{d+1}}\frac{\exp\big\{-\frac{1-z(u;x,y)^2}{c_4t}\big\}}
{t^{\lambda+1/2}\big(1+\frac{1-z(u;x,y)^2}{t}\big)^\lambda} \prod_{i=1}^{d+1} (1-u_i^2)^{\kappa_i-1} du.
\end{equation}
Just as in the proof of Theorem~\ref{thm:Gauss-ball} by replacing
the constant $c_4$ in \eqref{main-est1-TT} by a smaller constant $c_4'$ we can eliminate the term
$\big(1+  \frac{1-z(u;x,y)^2}{t}\big)^\lambda$ in the denominator.
Thus, it follows that
\begin{equation} \label{main-est2-TT}
e^{t\CD_\kappa}(x,y)
\ge \frac{c_3'}{t^{\lambda+1/2}}
\int_{[-1,1]^{d+1}}\exp\Big\{-\frac{1-z(u;x,y)^2}{c_4't}\Big\}\prod_{i=1}^{d+1} (1-u_i^2)^{\kappa_i-1}du.
\end{equation}
By simply deleting that term in \eqref{main-est-TT} we get
\begin{equation*} 
e^{t\CD_\kappa}(x,y)
\le \frac{c_1}{t^{\lambda+1/2}}
\int_{[-1,1]^{d+1}}\exp\Big\{-\frac{1-z(u;x,y)^2}{c_2t}\Big\}\prod_{i=1}^{d+1} (1-u_i^2)^{\kappa_i-1}du.
\end{equation*}
Evidently,
$$
1 - z(u;x,y)^2 = (1+|z(u;x,y)|)(1 - |z(u;x,y)|) \ge 1-|z(u,x,y)| \ge 1- \sum_{i=1}^{d+1}|u_i| \sqrt{x_iy_i}.
$$
Using the symmetry of the last term above with respect to sign changes of $u_i$,
and that $1-u_i^2\sim 1-u_i$ when $0\le u_i\le 1$,
we conclude that
\begin{equation} \label{main-est3-TT}
e^{t\CD_\kappa}(x,y)
\le \frac{c_1'}{t^{\lambda+1/2}}
\int_{[0,1]^{d+1}}\exp\Big\{-\frac{1-z(u;x,y)}{c_2t}\Big\}\prod_{i=1}^{d+1} (1-u_i)^{\kappa_i-1}du.
\end{equation}
Similarly, using that $1-z(u; x,y)^2\le 2(1-z(u; x,y))$
we infer from \eqref{main-est2-TT} that
\begin{equation} \label{main-est4-TT}
e^{t\CD_\kappa}(x,y)
\ge \frac{c_3''}{t^{\lambda+1/2}}
\int_{[0,1]^{d+1}}\exp\Big\{-\frac{1-z(u;x,y)}{c_4''t}\Big\}\prod_{i=1}^{d+1} (1-u_i)^{\kappa_i-1}du.
\end{equation}

By the definition of $d_\TT(x,y)$ in \eqref{def-dist-simpl} we have
$$
1-  \sum_{i=1}^{d+1} \sqrt{x_iy_i} = 1-\cos d_\TT(x, y) = 2\sin^2\frac {d_\TT(x,y)}{2} \sim d_\TT(x,y)^2
$$
and hence
\begin{equation}\label{z-simplex}
1-z(u;x,y) = 1-  \sum_{i=1}^{d+1} \sqrt{x_iy_i} + \sum_{i=1}^{d+1} (1-u_i) \sqrt{x_iy_i}
 \sim d_\TT(x,y)^2 +  \sum_{i=1}^{d+1} (1 - u_i) \sqrt{x_iy_i}.
\end{equation}
Consequently,
\begin{equation}\label{exp-1}
\exp\Big\{-\frac{1-z(u;x,y)}{c_2t}\Big\}
\le  \exp\Big\{-\frac{d_\TT(x,y)^2}{c't}\Big\}
\prod_{i=1}^{d+1} \exp\Big\{-\frac{(1-u_i)\sqrt{x_iy_i}}{c't}\Big\}
\end{equation}
and
\begin{equation}\label{exp-2}
\exp\Big\{-\frac{1-z(u;x,y)}{c_4''t}\Big\}
\ge  \exp\Big\{-\frac{d_\TT(x,y)^2}{c''t}\Big\}
\prod_{i=1}^{d+1} \exp\Big\{-\frac{(1-u_i)\sqrt{x_iy_i}}{c''t}\Big\}.
\end{equation}

For $x,y\in [0,1]$ and $\kappa>0$, denote
\begin{equation}\label{A-T}
A_t(\kappa; x,y):= \kappa \int_{0}^1\exp\Big\{-\frac{(1-u)\sqrt{xy}}{ct}\Big\}(1-u)^{\kappa-1}du,
\end{equation}
where $c>0$ is a constant.
We claim that for any $0< \varepsilon \le 1$
\begin{equation}\label{est-At}
\frac{c_\diamond t^{|\kappa|}}{\prod_{i=1}^{d+1}(x_i+t)^{\kappa_i/2}(y_i+t)^{\kappa_i/2}}
\le \prod_{i=1}^{d+1} A_t(\kappa_i; x_i,y_i)
\le \frac{c^\diamond t^{|\kappa|}\exp\Big\{\varepsilon \frac{d_\TT(x,y)^2}{t}\Big\}}
{\prod_{i=1}^{d+1}(x_i+t)^{\kappa_i/2}(y_i+t)^{\kappa_i/2}},
\end{equation}
where $c^\diamond>0$ depends on $\varepsilon$.

Assume for a moment that the inequalities \eqref{est-At} are valid.
Then by \eqref{main-est3-TT}, \eqref{exp-1}, and the right-hand side inequality in \eqref{est-At} we obtain
\begin{align*}
e^{t\CD_\kappa}(x, y)
&\le  \frac{c}{t^{|\kappa|+d/2}}\exp\Big\{-\frac{d_\TT(x,y)^2}{c't}\Big\}
\frac{t^{|\kappa|}\exp\big\{\varepsilon\frac{d_\TT(x,y)^2}{t}\big\}}
{\prod_{i=1}^{d+1}(x_i+t)^{\kappa_i/2}\prod_{i=1}^{d+1}(y_i+t)^{\kappa_i/2}}
\\
&\le \frac{c\exp\big\{-\frac{d_\TT(x, y)^2}{2c't}\big\}}{\big[V_\TT(x, \sqrt{t})V_\TT(y, \sqrt{t})\big]^{1/2}}.
\end{align*}
Here we used that $\lambda=|\kappa|+(d-1)/2$, and $V_\TT(x, \sqrt{t})= t^{d/2}\prod_{i=1}^{d+1}(x_i+t)^{\kappa_i}$
and the similar expression for $V_\TT(y, \sqrt{t})$,
which follow by \eqref{V-ball-simpl}.
We also used the right-hand side estimate in \eqref{est-At} with $\varepsilon=(2c')^{-1}$.
The above inequalities yields the upper bound estimate in \eqref{gauss-simplex}.
One similarly shows that \eqref{main-est4-TT}, \eqref{exp-2}, and the left-hand side inequality in \eqref{est-At}
imply the lower bound estimate in \eqref{gauss-simplex}.

\smallskip

It remains to prove the estimates in \eqref{est-At}.
We first focus on the lower bound estimate in \eqref{est-At}.
If $\sqrt{xy}/t\le 1$, then
$\exp\big\{-\frac{(1-u)\sqrt{xy}}{ct}\big\} \ge c'>0$
and hence
\begin{equation*}
A_t(\kappa; x,y) \ge c' \ge c'(t/\sqrt{xy})^\kappa.
\end{equation*}
Assume $\sqrt{xy}/t>1$. Then applying the substitution $v=\frac{(1-u)\sqrt{xy}}{t}$
we obtain
\begin{align*}
A_t(\kappa; x,y)= \frac{t^\kappa}{(\sqrt{xy})^\kappa}\int_{0}^{\sqrt{xy}/t}e^{-v/c}v^{\kappa-1}dv
\ge \frac{t^\kappa}{(\sqrt{xy})^\kappa}\int_0^1e^{-v/c}v^{\kappa-1}dv
=\frac{c't^\kappa}{(\sqrt{xy})^\kappa}.
\end{align*}
Thus in both cases
\begin{equation*}
A_t(\kappa; x,y) \ge \frac{c't^\kappa}{(\sqrt{xy})^\kappa}
\ge \frac{c't^\kappa}{(x+t)^{\kappa/2}(y+t)^{\kappa/2}},
\end{equation*}
which yields the lower bound estimate in \eqref{est-At}.

\smallskip

We now prove the upper bound estimate in \eqref{est-At}.
Clearly
$\exp\big\{-\frac{(1-u)\sqrt{xy}}{ct}\big\}\le 1$ and hence
$A_t(\kappa; x,y)\le c'$.
On the other hand, from above it follows that
\begin{align*}
A_t(\kappa; x,y)
\le \frac{t^\kappa}{(\sqrt{xy})^\kappa}\int_0^\infty e^{-v/c}v^{\kappa-1}dv
=\frac{c''t^\kappa}{(\sqrt{xy})^\kappa}.
\end{align*}
Together, these two estimates yield
\begin{equation*}
A_t(\kappa; x,y) \le \frac{c^\star t^\kappa}{(\sqrt{xy}+t)^\kappa},
\end{equation*}
implying
\begin{equation}\label{est-prod-A}
\prod_{i=1}^{d+1} A_t(\kappa_i; x_i,y_i)
\le \frac{c^\star t^{|\kappa|}}{\prod_{i=1}^{d+1}\big(\sqrt{x_iy_i }+ t\big)^{\kappa_i}}.
\end{equation}
To show that this leads to the desired upper bound estimate,
we need the following simple inequality (see \cite[(2.50)]{IPX})
$$
\left|\sqrt{x_i}-\sqrt{y_i}\right|\le d_\TT(x,y),
\quad i=1, \dots, d+1, \;\; x,y \in \TT^d.
$$
This along with \eqref{elementary} implies
$$
\big(\sqrt{x_i}+ \sqrt{t}\big)\big(\sqrt{y_i}+\sqrt{t}\big)
\le c\big(\sqrt{x_iy_i}+ t\big)\Big(1+ \frac{d_\TT(x,y)}{\sqrt{t}}\Big),
$$
which leads to
\begin{align*}
\prod_{i=1}^{d+1}(x_i+t)^{\kappa_i/2}(y_i+t)^{\kappa_i/2}
& \sim \prod_{i=1}^{d+1}\big(\sqrt{t} + \sqrt{x_i}\big)^{\kappa_i}\big(\sqrt{t} + \sqrt{y_i}\big)^{\kappa_i}
\\
& \le c \prod_{i=1}^{d+1}\big(\sqrt{x_iy_i }+ t\big)^{\kappa_i}
\Big(1+ \frac{d_\TT(x,y)}{\sqrt{t}}\Big)^{|\kappa|}
\\
& \le c(\varepsilon) \prod_{i=1}^{d+1}\big(\sqrt{x_iy_i }+ t\big)^{\kappa_i}
\exp\Big\{\varepsilon|\kappa|\frac{d_\TT(x,y)^2}{t}\Big\}.
\end{align*}
Here for the last inequality we used \eqref{ineq-3} with $\mu=|\kappa|$.
Together, the above and \eqref{est-prod-A} yield the upper bound estimate in \eqref{est-At}.

\smallskip

We now consider the case when one or more $\kappa_i=0$, $1\le i\le n+1$. In this case,
the kernel representation \eqref{rep-Pn-simpl} holds under the limit. If $\kappa_i =0$, then the integral
over $u_i$ in \eqref{h-kernel-simplex2} is replaced by the average of point evaluations at $u_i=1$ and $u_i=-1$.
It is easy to see that all deductions that lead to \eqref{est-At} are still valid with the realization that \eqref{A-T}
holds under the limit
$$
\lim_{\kappa \to 0+} A_t(\kappa; x,y) =
\lim_{\kappa \to 0+} \kappa \int_{0}^1\exp\Big\{-\frac{(1-u)\sqrt{xy}}{ct}\Big\}(1-u)^{\kappa-1}du =1.
$$
This completes the proof.
\end{proof}

\end{document}